\newtheorem{theorem}{Theorem}
\newtheorem*{theorem*}{Theorem}
\newtheorem{proposition}{Proposition}
\theoremstyle{definition}
\newtheorem*{definition*}{\sc Definition}
\newtheorem{remark}{\sc Remark}
\newtheorem*{remarks}{\sc Remarks}
\newtheorem*{example*}{\bf Example}
\newcommand{\loc}{{\rm loc}}
\newcommand{\clos}{{\rm clos}}
\def\expandafter\normalsize\expandafter{%
    \normalsize
    \setlength\abovedisplayshortskip{8pt}
    \setlength\belowdisplayshortskip{8pt}
}
\begin{document}

\title[$\mathcal W^{\alpha, p}$ and $C^{0,\gamma}$ regularity of solutions to $(\mu - \Delta + b \cdot \nabla)u=f$]{$\mathcal W^{\alpha, p}$ and $C^{0,\gamma}$ regularity of solutions to $(\mu - \Delta + b \cdot \nabla)u=f$ with form-bounded vector fields}

\author{Damir Kinzebulatov}

\address{Universit\'{e} Laval, D\'{e}partement de math\'{e}matiques et de statistique, 1045 av.\,de la M\'{e}decine, Qu\'{e}bec, QC, G1V 0A6, Canada}

\email{damir.kinzebulatov@mat.ulaval.ca}

%\urladdr{http://archimede.mat.ulaval.ca/pages/kinzebulatov}
\thanks{The research is supported in part by the Natural Sciences and Engineering Research Council of Canada}

\keywords{Elliptic operators, form-bounded vector fields, regularity of solutions, Feller semigroups}

\subjclass[2000]{31C25, 47B44 (primary), 35D70 (secondary)}

\begin{abstract}
We consider the operator
$-\Delta +b \cdot \nabla$
with $b:\mathbb R^d \rightarrow \mathbb R^d$ ($d \geq 3$) in the class of form-bounded vector fields
(containing vector fields having critical-order singularities), 
and
characterize quantitative dependence of the $\mathcal W^{1+\frac{2}{q},p}$ ($2 \leq p < q$) and the $C^{0,\gamma}$ regularity of solutions to the corresponding elliptic equation in $L^p$ on the value of the form-bound of $b$.
\end{abstract}

\maketitle

Let $d \geq 3$. Consider the formal differential expression 
\begin{equation}
\label{op}
-\Delta + b \cdot \nabla, \quad b:\mathbb R^d \rightarrow \mathbb R^d,
\end{equation}
with $b$ in the class  of form-bounded vector fields $\mathbf{F}_\delta$, $\delta>0$, i.e.\,$|b| \in L^2_{\loc} \equiv L^2_{\loc}(\mathbb R^d,\mathcal L^d)$ and 
there exists a constant $\lambda=\lambda_\delta>0$ such that 
$$
\||b|(\lambda -\Delta)^{-\frac{1}{2}}\|_{2\rightarrow 2}\leq \sqrt{\delta}
$$
(see examples below). It has been established in \cite{KS} that if $\delta<1$, then for every $p \in [2,2/\sqrt{\delta}[$ \eqref{op} has an operator realization $\Lambda_p(b)$ on $L^p$ as the generator  of a positivity preserving, $L^\infty$ contraction, quasi contraction $C_0$ semigroup $e^{-t\Lambda_p(b)}$ such that
$D(\Lambda_p(b)) \subset W^{1,p} \cap W^{1,\frac{pd}{d-2}}.$ Moreover, there exist constants $\mu_1 \equiv \mu_1(d,p,\delta)>0$ and $K_i=K_i(d,p,\delta)>0$, $i=1,2$, such that $u:=(\mu+\Lambda_p(b))^{-1}f$, $f \in L^p$ satisfies for all $\mu>\mu_1$
\begin{equation*}
\|\nabla u\|_p \leq K_1(\mu-\mu_1)^{-\frac{1}{2}}\|f\|_p,  \qquad
\|\nabla|\nabla u|^\frac{p}{2}\|_2^{\frac{2}{p}} \leq K_2(\mu-\mu_1)^{\frac{1}{p}-\frac{1}{2}}\|f\|_p.
\end{equation*}
In particular, if $\delta<1 \wedge \big(\frac{2}{d-2}\bigr)^2$, there exists $p>2 \vee (d-2)$ such that $u \in C^{0,\gamma}$, $\gamma=1-\frac{d-2}{p}$.

%The next theorem provides, in the assumptions of \cite{KS}, additional information about the regularity of the order $>1$ derivatives of $u$:

The next theorem improves on the regularity of $u$ under the same constraints on $\delta$:

\begin{theorem}[Main result]
\label{thm1} Let $d \geq 3$. Assume that $b \in \mathbf{F}_\delta$, $\delta<1$. Then for every $p \in \big[2,\frac{2}{\sqrt{\delta}}[$ the formal differential expression $-\Delta + b \cdot \nabla$ has an operator realization $\Lambda_p(b)$ on $L^p$ as the generator  of a positivity preserving, $L^\infty$ contraction, quasi contraction $C_0$ semigroup $e^{-t\Lambda_p(b)}$ such that:

{\rm(\textit{i})} The resolvent admits the representation
$$
\big(\mu + \Lambda_p(b)\big)^{-1}=\Theta(\mu,b), \quad \mu > \mu_0,
$$
for a $\mu_0 \equiv \mu_0(d,p,\delta)>0$,
where
$$
\Theta(\mu,b):=(\mu -\Delta )^{-1} - Q_p (1+T_p)^{-1}G_p, 
$$
the operators $Q_p, G_p, T_p \in \mathcal B(L^p)$,
$
\|G_p\|_{p \rightarrow p} \leq C_1 \mu^{-\frac{1}{2}+\frac{1}{p}}$, $\|Q_p\|_{p \rightarrow p} \leq C_2 \mu^{-\frac{1}{2}-\frac{1}{p}}$, $ \|T_p\|_{p \rightarrow p} \leq c_{\delta,p}< 1,
$ where $c_{\delta,p}:=\bigl(\frac{p}{2}\delta + \frac{p-2}{2}\sqrt{\delta}\bigr)^{\frac{1}{p}}\bigl(p-1-(p-1)\frac{p-2}{2}\sqrt{\delta} - \frac{p(p-2)}{4}\delta\bigr)^{-\frac{1}{p}}$,
$$
G_p:=b^{\frac{2}{p}} \cdot \nabla (\mu -\Delta)^{-1}, \quad b^{\frac{2}{p}}:=|b|^{\frac{2}{p}-1}b,
$$
and $Q_p$, $T_p$ are the extensions by continuity of densely defined {\rm(}on $\mathcal E:=\bigcup_{\varepsilon>0}e^{-\varepsilon|b|}L^p${\rm)} operators
$$
Q_p \upharpoonright {\mathcal E}:=(\mu -\Delta)^{-1}|b|^{1-\frac{2}{p}}, \quad T_p \upharpoonright  {\mathcal E}:=b^{\frac{2}{p}}\cdot \nabla(\mu - \Delta)^{-1}|b|^{1-\frac{2}{p}}.
$$

{\rm (\textit{ii})} For each $2 \leq r<p<q<\infty$ and $\mu > \mu_0$, define  
$$
G_p(r):=b^{\frac{2}{p}} \cdot \nabla (\mu -\Delta )^{-\frac{1}{2}-\frac{1}{r}} \in \mathcal B(L^p), \qquad 
Q_p(q):=(\mu -\Delta )^{-\frac{1}{2}+\frac{1}{q}}|b|^{1-\frac{2}{p}} \quad
\text{ on } \mathcal E.
$$
The extension of $Q_p(q)$ by continuity  we denote again by $Q_{p}(q)$.
Then $Q_{p}(q)\in \mathcal B(L^p)$ and
$$
\Theta_p(\mu, b)= (\mu - \Delta)^{-1} - (\mu - \Delta)^{-\frac{1}{2}-\frac{1}{q}} Q_{p}(q) (1 + T_p)^{-1} G_{p}(r) (\mu - \Delta)^{-\frac{1}{2}+\frac{1}{r}}, \qquad \mu>\mu_0.\\
$$
Thus,
\begin{equation}
\label{reg}
\tag{$\star$}
\big(\mu + \Lambda_p(b)\big)^{-1} \in \mathcal B\bigl(\,\mathcal W^{-1+\frac{2}{r},p}, \; \mathcal W^{1+\frac{2}{q},p}\,\bigr)
\end{equation}
{\rm (}$\mathcal W^{\alpha,p}$ is the Bessel potential space{\rm)}.

\smallskip

{\rm (\textit{iii})}\; By {\rm(\textit{i})} and {\rm(\textit{ii})},
$D\bigl(\Lambda_p(b)\bigr)\subset \mathcal W^{1+\frac{2}{q},p}$ {\rm($q>p$)}. In particular, by the Sobolev Embedding Theorem, for $d \geq 4$, if $\delta<\big(\frac{2}{d-2}\bigr)^2$ then there exists $p>d-2$ such that $D\bigl(\Lambda_p(b)\bigr) \subset C^{0,\gamma}$, $\gamma<1-\frac{d-2}{p}$. {\rm(}For $d=3$ the corresponding inclusion can be improved, see remarks below.{\rm)}

\medskip

{\rm (\textit{iv})} \; $e^{-t \Lambda_p (b_n)} \rightarrow e^{-t \Lambda_p(b)}$ \text{ strongly in $L^p$ \;\;locally uniformly in} $t \geq 0,$

\medskip
\noindent 
where
$b_n := e^{\epsilon_n\Delta} (\mathbf 1_n b)$,  $\epsilon_n \downarrow 0$, $n \geq 1$,  $\mathbf 1_n$ is the indicator of  $\{x \in \mathbb R^d \mid  \; |x| \leq n,  |b(x)| \leq n \}$,
and
 $\Lambda_{p}(b_n):=-\Delta + b_n\cdot \nabla$, $D(\Lambda_p(b_n))=\mathcal W^{2,p}$.

\end{theorem}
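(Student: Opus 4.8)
The plan is to deduce the whole statement from the smooth, bounded case by approximation. Fix $p \in [2, 2/\sqrt{\delta}[$ and $\mu$ large, and work with the regularizations $b_n = e^{\epsilon_n\Delta}(\mathbf 1_n b)$. Each $b_n$ is smooth and bounded, so classical theory makes $\Lambda_p(b_n) = -\Delta + b_n\cdot\nabla$, $D(\Lambda_p(b_n)) = \mathcal W^{2,p}$, a $C_0$ generator on $L^p$ with all the desired qualitative properties, and, crucially, $b_n \in \mathbf F_\delta$ with the \emph{same} $\delta$: from $|b_n| \le e^{\epsilon_n\Delta}(\mathbf 1_n |b|) \le e^{\epsilon_n\Delta}|b|$ together with the positivity of the kernels of $e^{\epsilon\Delta}$ and $(\lambda-\Delta)^{-1/2}$ and Jensen's inequality one gets $\||b_n|(\lambda-\Delta)^{-1/2}\|_{2\to 2} \le \sqrt{\delta}$, uniformly in $n$. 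For such $b_n$ the operators $G_p(b_n) = b_n^{2/p}\cdot\nabla(\mu-\Delta)^{-1}$, $Q_p(b_n) = (\mu-\Delta)^{-1}|b_n|^{1-2/p}$ and $T_p(b_n) = b_n^{2/p}\cdot\nabla(\mu-\Delta)^{-1}|b_n|^{1-2/p}$ are honest compositions of bounded operators, $\Theta(\mu,b_n)f \in \mathcal W^{2,p}$, and the factorization $b_n\cdot\nabla = |b_n|^{1-2/p}\, b_n^{2/p}\cdot\nabla$ gives by a one-line computation $(\mu - \Delta + b_n\cdot\nabla)\,\Theta(\mu,b_n)f = f$; since $\mu + \Lambda_p(b_n)$ is injective this identifies $\Theta(\mu,b_n) = (\mu+\Lambda_p(b_n))^{-1}$. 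The theorem is thereby reduced to uniform bounds plus a passage to the limit.

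The heart of the matter, and what I expect to be the main obstacle, is the uniform $L^p$ bounds $\|G_p(b_n)\|_{p\to p} \le C_1\mu^{-1/2+1/p}$, $\|Q_p(b_n)\|_{p\to p} \le C_2\mu^{-1/2-1/p}$ and, decisively, $\|T_p(b_n)\|_{p\to p} \le c_{\delta,p}$ with $c_{\delta,p} < 1$ exactly on $[2,2/\sqrt{\delta}[$ (at $p = 2$ this collapses to $c_{\delta,2} = \sqrt{\delta}$, the raw form bound). These should follow from the $L^2$ form-bound, equivalently $\|(\lambda-\Delta)^{-1/2}|b_n|^2(\lambda-\Delta)^{-1/2}\|_{2\to 2} \le \delta$ (so that $|b_n|^2$ is a form-bounded potential with the same bound), fed into $L^p$ integration-by-parts estimates of the type used for Schr\"odinger operators with form-bounded potentials: reducing $\|T_p(b_n)g\|_p$ to a gradient estimate for $(\mu-\Delta)^{-1}$ applied to $|b_n|(\cdot)$, the pointwise identity $|\nabla |v|^{p/2}|^2 = \tfrac{p^2}{4}|v|^{p-2}|\nabla v|^2$ together with the attendant cross terms, H\"older's inequality, and the replacement of each occurrence of $|b_n|(\lambda-\Delta)^{-1/2}$ (or its adjoint) by $\sqrt{\delta}$ produce exactly the coefficients $\tfrac p2$, $\tfrac{p-2}{2}$, $\tfrac{p(p-2)}{4}$ appearing in $c_{\delta,p}$, the lower-order remainders being absorbed into the powers of $\mu$ (which must be tracked carefully to obtain the stated dependence). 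The genuinely delicate point is sharpness: the constant has to stay below $1$ all the way up to $p = 2/\sqrt{\delta}$.

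Granting the uniform bounds, the pieces assemble routinely. The operators $\Theta(\mu,b_n) = (\mu-\Delta)^{-1} - Q_p(b_n)(1+T_p(b_n))^{-1}G_p(b_n)$ are uniformly bounded (Neumann series for $(1+T_p(b_n))^{-1}$, of norm $\le (1-c_{\delta,p})^{-1}$) and satisfy the resolvent identity; since $b_n \to b$ in $L^2_{\loc}$ and in the form sense, $G_p(b_n), Q_p(b_n), T_p(b_n)$ converge strongly to their $b$-counterparts, the latter first defined on the dense subspace $\mathcal E = \bigcup_{\varepsilon>0} e^{-\varepsilon|b|}L^p$ and then extended by continuity using the uniform bounds; hence $(1+T_p(b_n))^{-1} \to (1+T_p(b))^{-1}$ and $\Theta(\mu,b_n) \to \Theta(\mu,b)$ strongly, so $\Theta(\mu,b)$ is a pseudo-resolvent, injective with dense range, and therefore equals $(\mu+\Lambda)^{-1}$ for a closed densely defined $\Lambda$. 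Uniform quasi-contractivity, positivity preservation and $L^\infty$-contractivity of $e^{-t\Lambda_p(b_n)}$ pass to the limit, and Trotter--Kato yields $e^{-t\Lambda_p(b_n)} \to e^{-t\Lambda}$ strongly, locally uniformly in $t$; since the realization of \cite{KS} recalled above is obtained as exactly this limit, $\Lambda = \Lambda_p(b)$. This proves (i) and (iv).

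For (ii) I would insert fractional powers of $\mu - \Delta$ into the correction term: write $Q_p(1+T_p)^{-1}G_p = (\mu-\Delta)^{-\frac12-\frac1q}\, Q_p(q)\, (1+T_p)^{-1}\, G_p(r)\, (\mu-\Delta)^{-\frac12+\frac1r}$ with $G_p(r) = b^{2/p}\cdot\nabla(\mu-\Delta)^{-\frac12-\frac1r}$ and $Q_p(q) = (\mu-\Delta)^{-\frac12+\frac1q}|b|^{1-2/p}$ on $\mathcal E$, extended by continuity. The role of the constraint $2 \le r < p < q < \infty$ is precisely that $G_p(r)$ and $Q_p(q)$ remain bounded on $L^p$: the same weighted-resolvent estimates as in the crucial step absorb the extra smoothing of order $2/r$ (resp. the shortfall of order $2/q$) exactly because $r < p$ (resp. $q > p$). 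Running the limiting argument again gives $Q_p(q) \in \mathcal B(L^p)$ and the stated factorization of $\Theta_p(\mu,b)$; chasing the mapping properties — $(\mu-\Delta)^{-\frac12+\frac1r}\colon \mathcal W^{-1+\frac{2}{r},p} \to L^p$, then $G_p(r), (1+T_p)^{-1}, Q_p(q)$ preserve $L^p$, then $(\mu-\Delta)^{-\frac12-\frac1q}\colon L^p \to \mathcal W^{1+\frac{2}{q},p}$, while $(\mu-\Delta)^{-1}\colon \mathcal W^{-1+\frac{2}{r},p} \to \mathcal W^{1+\frac{2}{r},p} \subset \mathcal W^{1+\frac{2}{q},p}$ since $q > r$ — yields \eqref{reg}. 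Finally, for (iii) take $r = 2$, so $D(\Lambda_p(b)) = \Theta(\mu,b)L^p \subset \mathcal W^{1+\frac{2}{q},p}$ for every $q > p$; if $\delta < (2/(d-2))^2$ one may pick $p \in [2,2/\sqrt{\delta}[$ with $p > d-2$, and then for $q$ close enough to $p$ the Sobolev embedding $\mathcal W^{1+\frac{2}{q},p} \hookrightarrow C^{0,\gamma}$ holds for any $\gamma < 1 + \frac2p - \frac dp = 1 - \frac{d-2}{p}$, which is (iii); for $d = 3$ the requirement $p > d-2 = 1$ is vacuous and a sharper embedding applies, giving the improvement mentioned.
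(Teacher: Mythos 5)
Your proposal follows the paper's proof essentially step for step: regularize to $b_n\in\mathbf F_\delta$ (uniformly in $n$), prove the three operator bounds for $b_n$, identify $\Theta(\mu,b_n)=(\mu+\Lambda_p(b_n))^{-1}$, pass to the limit via the pseudo-resolvent identity and the Trotter approximation theorem, and obtain (\textit{ii})--(\textit{iii}) by inserting fractional powers $(\mu-\Delta)^{\pm(\frac12-\frac1q)}$, $(\mu-\Delta)^{\pm(\frac12-\frac1r)}$ and the Sobolev embedding. The one piece you only sketch --- the bound $\|T_p\|_{p\to p}\le c_{\delta,p}<1$, which is indeed the paper's sole new ingredient (Proposition 1) --- is obtained there exactly as you indicate: apply the form bound to $\varphi=|\nabla u|^{p/2}$, then control $\lambda\|\nabla u\|_p^p+\|\nabla|\nabla u|^{p/2}\|_2^2$ by testing $(\mu-\Delta)u=|b|^{1-\frac2p}f$ against $-\nabla\cdot\bigl(\nabla u\,|\nabla u|^{p-2}\bigr)$ and optimizing the Young-inequality parameter at $\alpha=\frac p4\sqrt\delta$, which yields precisely the stated $c_{\delta,p}$ with $c_{\delta,p}<1$ iff $\sqrt\delta<2/p$.
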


\begin{remarks}
1.~For $d=3$, by the Miyadera Perturbation Theorem, the assumption $b \in \mathbf F_\delta$, $ \delta < 1$ implies that $-\Lambda_2(b) = \Delta - b\cdot \nabla$ of domain $W^{2,2}$ is the generator of a $C_0$ semigroup in $L^2$, and hence, for $\mu > \lambda\delta$, $(\mu + \Lambda_2(b))^{-1} : L^2 \rightarrow W^{1,6}.$ In particular, $D(\Lambda_2(b)) \subset C^{0,\gamma}$ with $\gamma=\frac{1}{2}$.
%However, already for $d=4,$ $W^{2,2} \subset W^{1,2j}, \; 2 j = d$ but not the crucial $2 j > d.$ 
%See \cite[sect.3.5]{KiS}.

2.~The class $\mathbf{F}_\delta$ contains a sub-critical class $[L^d + L^\infty]^d$ (with arbitrarily small form-bound $\delta$) as well as  vector fields having critical-order singularities, e.g.\,in the weak $L^{d}$ class or the Campanato-Morrey class etc. See e.g.\,\cite[sect.\,4]{KiS}.

3.~We say that $b: \mathbb{R}^d \rightarrow  \mathbb{R}^d$ belongs to $\mathbf{F}_\delta^{\scriptscriptstyle 1/2}$, the class of \textit{weakly} form-bounded vector fields, and write $b \in \mathbf{F}_\delta^{\scriptscriptstyle 1/2}$, if 
$|b| \in L^1_{\loc}$ and there exists $\lambda = \lambda_\delta > 0$ such that
$$
\| |b|^\frac{1}{2} (\lambda - \Delta)^{-\frac{1}{4}} \|_{2 \rightarrow 2} \leq \sqrt{\delta}.
$$
In \cite[Theorem 1.3]{Ki}, \cite[Theorem 4.3]{KiS}, we have constructed an operator realization $\Lambda_p(b)$ of $-\Delta + b \cdot \nabla$, $b \in \mathbf{F}_\delta^{\scriptscriptstyle 1/2}$, $m_d\delta<1$, $m_d:= \pi^\frac{1}{2}(2 e)^{-\frac{1}{2}} d^\frac{d}{2} (d-1)^{-\frac{d-1}{2}}$ as the generator of a positivity preserving, $L^\infty$ contraction, holomorphic semigroup on $L^p$, $p \in ]p_-,p_+[$,  $
 p_\mp:= \frac{2}{1 \pm \sqrt{1-m_d \delta}}$, such that for all $1 \leq r<p<q$
\begin{equation}
\tag{$\star\star$}
\label{reg2}
\big(\zeta + \Lambda_p(b)\big)^{-1} \in \mathcal B(\mathcal W^{-1+\frac{1}{r},p},\mathcal W^{1+\frac{1}{q},p})
\end{equation}
(cf.\,\eqref{reg}). In particular, if $m_d\delta<4\frac{d-2}{(d-1)^2}$, then there exists a $p>d-1$ such that $D(\Lambda_p(b))\subset C^{0,\gamma}$, $\gamma<1-\frac{d-1}{p}$.

(Despite the inclusion $\mathbf{F}_{\delta^2} \subsetneq \mathbf{F}_\delta^{\scriptscriptstyle 1/2}$, see \cite[sect.\,4]{KiS}, these two classes should be viewed as essentially incomparable, for the corresponding regularity results hold under different assumptions on the value of $\delta$.)

The proof of Theorem \ref{thm1} follows the Hille-Trotter approach of \cite{Ki}, \cite{KiS}.
However, the proof of the crucial estimates in Proposition \ref{prop1} below is based on \cite{KS}. 

4.~For $|b| \in L^{d,\infty}$, one can extract additional information about the regularity of $D(\Lambda_p(b))$  arguing as in remark 4 in \cite[sect.\,4.4]{KiS}.

5.~Let $C_\infty:=\{f \in C(\mathbb R^d): \lim_{x \rightarrow \infty}f(x)=0\}$ (with the $\sup$-norm). 
 Theorem \ref{thm1} allows to construct the generator $\Lambda_{C_\infty}(b)$ of an associated with $-\Delta + b \cdot\nabla$, $b \in \mathbf{F}_\delta$, $\delta<1 \wedge \big(\frac{2}{d-2}\bigr)^2$ Feller semigroup as $(\mu+\Lambda_{C_\infty}(b))^{-1}:=\bigl(\Theta_p(\mu,b) \upharpoonright L^p \cap C_\infty \bigr)_{C_\infty \rightarrow C_\infty}^{\clos}$, $p>2 \vee (d-2)$, by repeating \cite[proof of Theorem 4.4]{KiS} (for $-\Delta +b \cdot \nabla$, $b \in \mathbf{F}_\delta^{\scriptscriptstyle 1/2}$). Thus, we restore the result of \cite[Theorem 2]{KS}. This proof doesn't require the Moser-type iteration procedure $L^p \rightarrow L^\infty$ of \cite{KS}.

6.~The proof of Theorem \ref{thm1} extends directly to the operator studied in \cite{KiS2}:
$$-\nabla \cdot a \cdot \nabla + b \cdot \nabla \equiv -\sum_{i,j=1}^d  \nabla_i a_{ij}(x) \nabla_j + \sum_{k=1}^d b_k(x) \nabla_k, \qquad b \in \mathbf{F}_\delta,
$$
with
$$
a=I+c\,\mathsf{f} \otimes \mathsf{f} \qquad \text{ where } \quad c>-1, \quad \mathsf{f} \in \bigl[L^\infty \cap W_{\loc}^{1,2}\bigr]^d, \quad \|\mathsf{f}\|_\infty=1,
$$
$$
\nabla_i \mathsf{f} \in \mathbf{F}_{\eta^i},\;\;i=1,2,\dots,d, \quad
 \eta:=\sum_{i=1}^d\eta^i
$$
for all $p \geq 2$, $c$, $\eta$ and $\delta$ satisfying the assumptions of \cite[Theorem 2]{KiS2}.
(More generally, with
$a=I+\sum_{\ell=1}^\infty c_\ell\,\mathsf{f}_\ell \otimes \mathsf{f}_\ell$, $\mathsf{f}_\ell \in \bigl[L^\infty \cap W^{1,2}_{\loc}\bigr]^d$, $\|\mathsf{f}_\ell\|_\infty=1$
such that
$
\sum_{c_\ell<0} c_\ell>-1$, $\sum_{c_\ell>0} c_\ell<\infty
$ and
$
\nabla_i \mathsf{f}_\ell \in \mathbf{F}_{\eta^i_{\ell}}$, $i=1,2,\dots,d$, 
for appropriate $\eta^i_{\ell}>0$.)

\end{remarks}

\noindent\textbf{Acknowledgements.} I would like to express my gratitude to Yu.\,A.\,Semenov for helpful discussions.

\section{Proof of Theorem \ref{thm1}}

The following proposition is a new element in the Hille-Trotter approach of \cite{Ki}, \cite{KiS}.

\begin{proposition} 
\label{prop1}
{\rm(\textit{j})} Set 
$
G_p=b^{\frac{2}{p}} \cdot \nabla (\mu -\Delta )^{-1}$, $
Q_p=(\mu -\Delta )^{-1}|b|^{1-\frac{2}{p}}$, $T_p=b^{\frac{2}{p}}\cdot\nabla(\mu -\Delta)^{-1}|b|^{1-\frac{2}{p}}.
$
$Q_p$, $T_p$ are densely defined {\rm(}on $\mathcal E${\rm)} operators. Then there exists $\mu_0 \equiv \mu_0(d,p,\delta)>0$ such that
 $$
\|G_p\|_{p \rightarrow p} \leq C_1 \mu^{-\frac{1}{2}+\frac{1}{p}}, \quad \|Q_p\|_{p \rightarrow p} \leq C_2 \mu^{-\frac{1}{2}-\frac{1}{p}}, \quad \|T_p\|_{p \rightarrow p} \leq c_{\delta,p} < 1, \qquad \mu>\mu_0,
$$
where $c_{\delta,p}:=\biggl(\frac{p}{2}\delta + \frac{p-2}{2}\sqrt{\delta}\biggr)^{\frac{1}{p}}\biggl(p-1-(p-1)\frac{p-2}{2}\sqrt{\delta} - \frac{p(p-2)}{4}\delta\biggr)^{-\frac{1}{p}}$.

\smallskip

{\rm(\textit{jj})} Set 
$
G_p(r)=b^{\frac{2}{p}} \cdot \nabla (\mu -\Delta )^{-\frac{1}{2}-\frac{1}{r}}$, 
$Q_p(q)=(\mu -\Delta )^{-\frac{1}{2}+\frac{1}{q}}|b|^{1-\frac{2}{p}}, 
$
where $2 \leq r<p<q<\infty$. $Q_p(q)$ is a densely defined {\rm(}on $\mathcal E${\rm)} operator. Then for $\mu>\mu_0$
$$
\|G_p(r)\|_{p \rightarrow p} \leq K_{1,r}, \qquad \|Q_p(q)\|_{p \rightarrow p} \leq K_{2,q}.
$$
The extension of $Q_p(q)$ by continuity we denote again by $Q_p(q)$.

\end{proposition}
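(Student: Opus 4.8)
The plan is to derive all four bounds from two ingredients: the classical resolvent estimates for the free operator — $\|(\mu-\Delta)^{-s}\|_{p\to p}\le\mu^{-s}$ ($s>0$), $\|\nabla(\mu-\Delta)^{-1/2}\|_{p\to p}\le C_{d,p}$, $\|\nabla^2(\mu-\Delta)^{-1}\|_{p\to p}\le C_{d,p}$, all uniform in $\mu>0$ — and the form-boundedness of $b$, which I use in the equivalent form $\||b|f\|_2^2\le\delta\lambda\|f\|_2^2+\delta\|\nabla f\|_2^2$, $f\in W^{1,2}$, together with the pointwise inequality $|b^{2/p}\cdot\xi|\le|b|^{2/p}|\xi|$. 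I work first on the dense set $\mathcal E$, where all operations below (in particular the integrations by parts, none of which involves a derivative of $b$) are legitimate, and extend the resulting bounds by continuity; the $p$-Laplacian-type test function used for $T_p$ is understood to be suitably regularized.

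For $G_p$: put $v:=(\mu-\Delta)^{-1}f$. Then $\|G_pf\|_p^p\le\int|b|^2|\nabla v|^p$; applying the form bound to $|\nabla v|^{p/2}\in W^{1,2}$ and using $|\nabla|\nabla v||\le|\nabla^2 v|$ gives $\int|b|^2|\nabla v|^p\le\delta\lambda\|\nabla v\|_p^p+\delta\frac{p^2}{4}\|\nabla v\|_p^{p-2}\|\nabla^2 v\|_p^2$; now $\|\nabla v\|_p\le C\mu^{-1/2}\|f\|_p$ and $\|\nabla^2 v\|_p\le C\|f\|_p$, and since $\mu^{-p/2}\le\mu^{-(p-2)/2}$ for $\mu\ge1$ I get $\|G_p\|_{p\to p}\le C_1\mu^{-1/2+1/p}$ for $\mu>\mu_0:=\lambda\vee1$. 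For $Q_p$ I pass to the adjoint on $L^{p'}$: $\|Q_p\|_{p\to p}=\||b|^{1-2/p}(\mu-\Delta)^{-1}\|_{p'\to p'}$. With $\psi:=(\mu-\Delta)^{-1}\phi$ and $\theta:=\frac{p-2}{2(p-1)}$ (so that $2\theta=(1-\tfrac2p)p'$), Hölder gives $\||b|^{1-2/p}\psi\|_{p'}^{p'}=\int|b|^{2\theta}|\psi|^{p'}\le\big(\int|b|^2|\psi|^{p'}\big)^{\theta}\big(\int|\psi|^{p'}\big)^{1-\theta}$; applying the form bound to $|\psi|^{p'/2}$ and inserting $\|\psi\|_{p'}\le C\mu^{-1}\|\phi\|_{p'}$, $\|\nabla\psi\|_{p'}\le C\mu^{-1/2}\|\phi\|_{p'}$ bounds the first factor by $C\delta\mu^{-(p'-1)}\|\phi\|_{p'}^{p'}$ and the second by $C\mu^{-p'}\|\phi\|_{p'}^{p'}$; taking $p'$-th roots and using $\theta/p'=\tfrac12-\tfrac1p$ yields $\|Q_p\|_{p\to p}\le C_2\mu^{-1/2-1/p}$.

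The core is the estimate on $T_p$. Writing $w:=(\mu-\Delta)^{-1}|b|^{1-2/p}h$ one has $\|T_ph\|_p^p\le\int|b|^2|\nabla w|^p=:N$ and, by the form bound applied to $|\nabla w|^{p/2}$, $N\le\delta\lambda\|\nabla w\|_p^p+\delta\frac{p^2}{4}\int|\nabla w|^{p-2}|\nabla|\nabla w||^2$; so I must control $\|\nabla w\|_p^p$ and $\int|\nabla w|^{p-2}|\nabla|\nabla w||^2$ by $\|h\|_p^p$, uniformly for $\mu>\mu_0$. Here I follow the quadratic-form argument of \cite{KS}: test $(\mu-\Delta)w=|b|^{1-2/p}h$ against $-\nabla\cdot(|\nabla w|^{p-2}\nabla w)$, so that the left-hand side produces $\mu\|\nabla w\|_p^p$ and the non-negative quantities $\int|\nabla w|^{p-2}|\nabla^2 w|^2$, $\int|\nabla w|^{p-2}|\nabla|\nabla w||^2$, while on the right, after substituting $\Delta w=\mu w-|b|^{1-2/p}h$ and integrating by parts once more, there appear (i) the term $\int|b|^{2-4/p}h^2|\nabla w|^{p-2}=\int\big(|b|^2|\nabla w|^p\big)^{1-\frac2p}\big(|h|^p\big)^{\frac2p}$, which Young's inequality dominates by $(1-\tfrac2p)N+\tfrac2p\|h\|_p^p$ (the first summand being reabsorbed through the bound on $N$), (ii) a term proportional to $\mu\|\nabla w\|_p^p$ that recombines with the left-hand side, and (iii) cross terms estimated by the Cauchy–Schwarz inequality against $\int|\nabla w|^{p-2}|\nabla|\nabla w||^2$. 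Collecting terms gives a closed quadratic inequality among $N$, $\|\nabla w\|_p^p$ and $\int|\nabla w|^{p-2}|\nabla|\nabla w||^2$; its solvability is exactly the condition $c_{\delta,p}<1$ (equivalently $\delta<1$, $p<2/\sqrt\delta$), and optimizing the weights produces the stated value of $c_{\delta,p}$, with $\mu_0$ enlarged if necessary. I expect this constant bookkeeping — the part genuinely imported from \cite{KS} — to be the main obstacle; the rest is soft.

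Finally, part (jj) follows from (j) by subordination. For $\alpha\in(0,1)$, $(\mu-\Delta)^{-\alpha}=\frac{\sin\pi\alpha}{\pi}\int_0^\infty s^{-\alpha}(s+\mu-\Delta)^{-1}\,ds$. Taking $\alpha=\tfrac12+\tfrac1r$ (the case $r=2$ being $G_p(2)=G_p$) gives $G_p(r)=\frac{\sin\pi\alpha}{\pi}\int_0^\infty s^{-\alpha}\,b^{2/p}\cdot\nabla(s+\mu-\Delta)^{-1}\,ds$, hence, by (j), $\|G_p(r)\|_{p\to p}\le C\int_0^\infty s^{-\alpha}(\mu+s)^{-1/2+1/p}\,ds=C\mu^{1/p-1/r}\int_0^\infty\tau^{-\alpha}(1+\tau)^{-1/2+1/p}\,d\tau$; the last integral converges at $0$ since $\alpha<1$ and at $\infty$ since $\alpha>\tfrac12+\tfrac1p$, i.e. $r<p$, and since $\tfrac1p-\tfrac1r<0$ this bounds $\|G_p(r)\|_{p\to p}\le K_{1,r}$ for $\mu>\mu_0$. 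Likewise $(\mu-\Delta)^{-1/2+1/q}=(\mu-\Delta)^{-\beta}$ with $\beta=\tfrac12-\tfrac1q\in(0,\tfrac12)$, so on $\mathcal E$ one has $Q_p(q)=\frac{\sin\pi\beta}{\pi}\int_0^\infty s^{-\beta}(s+\mu-\Delta)^{-1}|b|^{1-2/p}\,ds$, whence by (j) $\|Q_p(q)\|_{p\to p}\le C\int_0^\infty s^{-\beta}(\mu+s)^{-1/2-1/p}\,ds=C\mu^{1/q-1/p}\int_0^\infty\tau^{-\beta}(1+\tau)^{-1/2-1/p}\,d\tau$, the integral converging at $\infty$ precisely because $\beta>\tfrac12-\tfrac1p$, i.e. $q>p$; since $\tfrac1q-\tfrac1p<0$, $\|Q_p(q)\|_{p\to p}\le K_{2,q}$, and $Q_p(q)$ extends by continuity.
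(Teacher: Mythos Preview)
Your treatment of $T_p$ and of part (\textit{jj}) is essentially the paper's: the same test function $-\nabla\cdot(|\nabla w|^{p-2}\nabla w)$, the same substitution of the equation, the same Young/Cauchy--Schwarz bookkeeping, and the same subordination formula. One imprecision: your term (ii) is not ``proportional to $\mu\|\nabla w\|_p^p$''; after substituting $\Delta w=\mu w-|b|^{1-2/p}h$ one gets $-\mu\langle |b|^{1-2/p}h,\,w\,|\nabla w|^{p-2}\rangle$, which the paper simply discards by sign (for $h\ge 0$, hence $w\ge 0$). This does not affect the outcome but the mechanism is different from what you wrote. Your $G_p$ bound takes a shortcut the paper avoids: you invoke the Calder\'on--Zygmund estimate $\|\nabla^2 v\|_p\le C\|f\|_p$ and H\"older on $\int|\nabla v|^{p-2}|\nabla^2 v|^2$, whereas the paper re-runs the $T_p$ energy argument with right-hand side $f$ to bound $\int|\nabla v|^{p-2}\bigl|\nabla|\nabla v|\bigr|^2$ directly. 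Both are valid; your route is quicker but imports an external ingredient.

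The genuine gap is in your $Q_p$ estimate. For $p>2$ one has $p'<2$, so $p'/2<1$; applying the form bound to $|\psi|^{p'/2}$ produces $\|\nabla|\psi|^{p'/2}\|_2^2=(p'/2)^2\int|\psi|^{p'-2}|\nabla\psi|^2$ with a \emph{negative} power $p'-2$, and there is no H\"older inequality that controls this by $\|\psi\|_{p'}$ and $\|\nabla\psi\|_{p'}$ --- the exponent $p'/(p'-2)$ you would need is negative. Your claimed bound $C\delta\mu^{-(p'-1)}\|\phi\|_{p'}^{p'}$ on the first factor is numerically correct, but the stated route does not deliver it. The paper bypasses duality entirely: it multiplies $(\mu-\Delta)u=|b|^{1-2/p}f$ by $u^{p-1}$ in $L^p$, obtains $\mu\|u\|_p^p+\tfrac{4(p-1)}{p^2}\|\nabla u^{p/2}\|_2^2=\langle|b|^{1-2/p}f,u^{p-1}\rangle$, splits the right side by Young, absorbs $\langle|b|^2u^p\rangle$ via the form bound, and closes to $\|u\|_p\le C_2\mu^{-1/2-1/p}\|f\|_p$. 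If you want to keep the duality viewpoint, you must likewise use the equation on $\psi$ (multiply by $\psi^{p'-1}$) to control $\int\psi^{p'-2}|\nabla\psi|^2$; the free resolvent bounds alone are not enough.
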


\begin{proof}
In what follows, we use notation
$$
\langle h\rangle:=\int_{\mathbb R^d} h(x)d\mathcal L^d, \quad \langle h,g\rangle:=\langle h\bar{g}\rangle.
$$

It suffices to consider the case $p>2$.

(\textit{j}) (\textbf{a}) Set $u:=(\mu-\Delta)^{-1}|b|^{1-\frac{2}{p}}f$, $0 \leq f \in L^p$. Then
\begin{align*}
&\|T_p f\|_p^p =\|b^{\frac{2}{p}} \nabla u\|_p^p = \langle |b|^2 |\nabla u|^p\rangle \notag \\
& = \||b|(\lambda-\Delta)^{-\frac{1}{2}} (\lambda-\Delta)^{\frac{1}{2}}|\nabla u|^{\frac{p}{2}}\|_2^2 \notag  \qquad (\lambda=\lambda_\delta) \notag  \\
& \leq \||b|(\lambda-\Delta)^{-\frac{1}{2}}\|_{2 \rightarrow 2}^2 \|(\lambda-\Delta)^{\frac{1}{2}}|\nabla u|^{\frac{p}{2}}\|_2^2 \notag  \\
& = \delta \|(\lambda-\Delta)^{\frac{1}{2}}|\nabla u|^{\frac{p}{2}}\|_2^2= \delta \bigl( \lambda\|\nabla u\|_p^p + \|\nabla |\nabla u|^{\frac{p}{2}}\|_2^2 \bigr). \notag
\end{align*}
It remains to prove the principal inequality
\[
\delta \bigl( \lambda\|\nabla u\|_p^p + \|\nabla |\nabla u|^{\frac{p}{2}}\|_2^2 \bigr) \notag  
\leq c^p_{\delta,p} \|f\|_p^p \tag{$\ast$} \label{ineq1}, 
\]
and conclude that $\|T_p\|_{p \rightarrow p} \leq c_{\delta, p}$. 

First, we prove an a priori variant of \eqref{ineq1}, i.e.\,for $u:=(\mu-\Delta)^{-1}|b|^{1-\frac{2}{p}}f$ with $b=b_n$. Since our assumptions on $\delta$ involve only strict inequalities, we may assume, upon selecting appropriate $\varepsilon_n \downarrow 0$, that $b_n \in \mathbf{F}_\delta$ with the same $\lambda=\lambda_\delta$ for all $n$.

 Set 
$$
w:=\nabla u, \quad I_q:=\sum_{r=1}^d\langle (\nabla_r w)^2 |w|^{p-2} \rangle, \quad
J_q:=\langle (\nabla |w|)^2 |w|^{p-2}\rangle.
$$
We multiply $(\mu-\Delta)u=|b|^{1-\frac{2}{p}}f$ by $\phi:=- \nabla \cdot (w|w|^{p-2})$ and integrate by parts to obtain
\begin{equation}
\label{main_id}
\mu \|w\|_p^p + I_p + (p-2)J_p = \langle |b|^{1-\frac{2}{p}}f, - \nabla \cdot (w|w|^{p-2})\rangle,
\end{equation}
where
\begin{align*}
&\langle |b|^{1-\frac{2}{p}}f, - \nabla \cdot (w|w|^{p-2})\rangle = \langle |b|^{1-\frac{2}{p}}f, (- \Delta u) |w|^{p-2} - (p-2) |w|^{p-3} w \cdot \nabla |w|\rangle \\
& (\text{use the equation } -\Delta u = - \mu u + |b|^{1-\frac{2}{p}}f) \\
&=\langle |b|^{1-\frac{2}{p}}f, \bigl( - \mu u + |b|^{1-\frac{2}{p}}f\bigr) |w|^{p-2}\rangle  - (p-2) \langle |b|^{1-\frac{2}{p}}f, |w|^{p-3} w \cdot \nabla |w|\rangle.
\end{align*}
We have

1) $\langle |b|^{1-\frac{2}{p}}f, (- \mu u)|w|^{p-2} \rangle \leq 0$,

2) $|\langle |b|^{1-\frac{2}{p}}f, |w|^{p-3} w \cdot \nabla |w|\rangle| \leq \alpha J_p+\frac{1}{4\alpha} N_p\;\;( \alpha>0$), where $N_p:=\langle |b|^{1-\frac{2}{p}}f, |b|^{1-\frac{2}{p}}f |w|^{p-2} \rangle$,

\noindent so, the RHS of \eqref{main_id} $\leq  (p-2) \alpha J_p +  \big(1+\frac{p-2}{4\alpha}\big)N_p,
$
where, in turn,
\begin{align*}
N_p& \leq \langle |b|^{2}|w|^p\rangle^{\frac{p-2}{p}} \langle f^{p}\rangle^{\frac{2}{p}} \\
&\leq \frac{p-2}{p} \langle |b|^{2}|w|^p\rangle + \frac{2}{p} \|f\|_p^p \qquad \text{(use $b \in \mathbf{F}_\delta$ $\Leftrightarrow$ $\|b\varphi\|_2^2 \leq \delta \|\nabla \varphi\|_2^2 + \lambda\delta\|\varphi\|_2^2$, $\varphi \in W^{1,2}$)} \\
& \leq \frac{p-2}{p} \bigg(\frac{p^2}{4}\delta J_q + \lambda\delta \|w\|_p^p \bigg) + \frac{2}{p} \|f\|_p^p.
\end{align*}
Thus, applying $I_q \geq J_q$ in the LHS of \eqref{main_id}, we obtain
$$
\bigl(\mu - c_0 \bigr)\|w\|_p^p + \biggl[p-1-(p-2)\left(\alpha + \frac{1}{4\alpha}\frac{p(p-2)}{4}\delta\right) - \frac{p(p-2)}{4}\delta\biggr]\frac{4}{p^2}\|\nabla |\nabla u|^{\frac{p}{2}}\|_2^2 \leq \left(1+\frac{p-2}{4\alpha}\right)\frac{2}{p}\|f\|_p^p,
$$
where $c_0=\frac{p-2}{p}\lambda\delta \big(1+\frac{p-2}{4\alpha}\big)$. It is now clear that one can find a sufficiently large $\mu_0 \equiv \mu_0(d,p,\delta)>0$ so that, for all $\mu > \mu_0$, \eqref{ineq1} (with $b=b_n$) holds  with
\begin{align*}
 c_{\delta,p}^p&=\delta \frac{p^2}{4}\frac{\left(1+\frac{p-2}{4\alpha}\right)\frac{2}{p}}{p-1-(p-2)\left(\alpha + \frac{1}{4\alpha}\frac{p(p-2)}{4}\delta\right) - \frac{p(p-2)}{4}\delta} \qquad \text{$\bigl($we  select $\alpha=\frac{p}{4}\sqrt{\delta}\bigr)$} \\
& = \frac{\frac{p}{2}\delta + \frac{p-2}{2}\sqrt{\delta}}{p-1-(p-1)\frac{p-2}{2}\sqrt{\delta} - \frac{p(p-2)}{4}\delta},
\end{align*}
as claimed.
Finally, we pass to the limit $n \rightarrow \infty$ using Fatou's Lemma. The proof of \eqref{ineq1} is completed.

\begin{remark}
It is seen that $\sqrt{\delta}<\frac{2}{p}\Rightarrow c_{\delta, p}<1$. We also note that the above choice of $\alpha$ is the best possible.
\end{remark}

\smallskip

(\textbf{b}) Set $u=(\mu-\Delta)^{-1}f$, $0 \leq f \in L^p$. Then
\begin{align*}
&\|G_p f\|_p^p=\|b^{\frac{2}{p}} \cdot \nabla u\|_p^p \\
& \text{(we argue as in (\textbf{a}))} \\
&\leq \delta \big(\lambda\|\nabla u\|_p^p + \|\nabla |\nabla u|^{\frac{p}{2}}\|_2^2 \big),
\end{align*}
where, clearly, $\|\nabla u\|^p_p \leq \mu^{-\frac{p}{2}}\|f\|_p^p$. In turn, arguing as in (\textbf{a}), we arrive at $\mu\|w\|_p^p + I_p + (p-2)J_p = \langle f,-\nabla \cdot (w|w|^{p-2})$ ($w=\nabla u$),
$$
\mu\|w\|_p^p + (p-1)J_p \leq \langle f^2,|w|^{p-2}\rangle + (p-2)\langle f, |w|^{p-3}w \cdot \nabla |w|\rangle),
$$
$$
\mu\|w\|_p^p + (p-1)J_p \leq \langle f^2,|w|^{p-2}\rangle + (p-2)\bigl(\varepsilon J_p + \frac{1}{4\varepsilon}\langle f^2,|w|^{p-2}\rangle  \bigr), \quad \varepsilon>0.
$$
Selecting $\varepsilon$ sufficiently small, we obtain
$$
J_p \leq C_0\|w\|_p^{p-2}\|f\|_p^2.
$$
Now,  applying $\|w\|_p \leq \mu^{-\frac{1}{2}}\|f\|_p$, we arrive at $\|\nabla |\nabla u|^{\frac{p}{2}}\|_2^2 \leq C\mu^{-\frac{p}{2}+1}\|f\|_p^p$. 
Hence,
$\|G_pf\|_{p} \leq C_1 \mu^{-\frac{1}{2}+\frac{1}{p}}\|f\|_p$ for all $\mu>\mu_0$.

\smallskip

(\textbf{c}) Set $u=(\mu-\Delta)^{-1}|b|^{1-\frac{2}{p}}f\;(=Q_p f)$, $0 \leq f \in L^p$. Then, multiplying  $(\mu  - \Delta )u = |b|^{1-\frac{2}{p}}f$ by $u^{p-1}$, we obtain
$$
\mu \|u\|_p^p + \frac{4(p-1)}{p^2} \|\nabla u^{\frac{p}{2}}\|_2^2 = \langle |b|^{1-\frac{2}{p}}f, u^{p-1} \rangle,
$$
where we estimate the RHS using Young's inequality:
\begin{align*}
\langle |b|^{1-\frac{2}{p}} u^{\frac{p}{2}-1}, f u^{\frac{p}{2}} \rangle \leq \varepsilon^{\frac{2p}{p-2}}\frac{p-2}{2p} \langle |b|^2 u^p \rangle + \varepsilon^{-\frac{2p}{p+2}}\frac{p+2}{2p} \langle f^{\frac{2p}{p+2}} u^{\frac{p^2}{p+2}}\rangle \quad \varepsilon>0.
\end{align*}
Using $b \in \mathbf{F}_\delta$ and selecting $\varepsilon>0$ sufficiently small, we obtain that for any $\mu_1>0$ there exists $C>0$ such that
$$
(\mu-\mu_1) \|u\|_p^p \leq C\langle f^{\frac{2p}{p+2}} u^{\frac{p^2}{p+2}}\rangle, \qquad \mu>\mu_1.
$$
Therefore,
$
(\mu-\mu_1) \|u\|_p^p \leq C\langle f^p \rangle^{\frac{2}{p+2}}\langle  u^{p}\rangle^{\frac{p}{p+2}}
$, so
$\|u\|_p \leq C_2 \mu^{-\frac{1}{2}-\frac{1}{p}}\|f\|_p
$. 
The proof of (\textit{j}) is completed.

\medskip

(\textit{jj})  Below we use the following formula: For every $0<\alpha<1$, $\mu > 0$,
\begin{equation*}
(\mu-\Delta)^{-\alpha}=\frac{\sin \pi \alpha}{\pi}\int_0^\infty t^{-\alpha}(t+\mu-\Delta)^{-1}dt.
\end{equation*}

We have 
\begin{align*}
\|Q_{p}(q)f\|_p & \leq ~\|(\mu-\Delta)^{-\frac{1}{2}+\frac{1}{q}}|b|^{1-\frac{2}{p}}|f|\|_p \\
& \leq 
k_{q}\int_0^\infty t^{-\frac{1}{2}+\frac{1}{q}}\|(t+\mu-\Delta)^{-1}|b|^{1-\frac{2}{p}}|f|\|_p dt  \\
& \text{(we use (\textbf{c}))} \\
& \leq ~
k_{q}C_{2} \int_0^\infty t^{-\frac{1}{2}+\frac{1}{q}}(t+\mu)^{-\frac{1}{2}-\frac{1}{p}}dt \;\|f\|_p =K_{2,q}\|f\|_p,\quad f \in \mathcal E,
\end{align*}
where, clearly, $K_{2,q}<\infty$ due to $q>p$.

\smallskip

It suffices to consider the case $r>2$. We have
\begin{align*}
\|G_{p}(r)f\|_p &\leq 
k_{r}\int_0^\infty t^{-\frac{1}{2}-\frac{1}{r}}\|b^{\frac{2}{p}}\cdot \nabla (t+\mu-\Delta)^{-1} f\|_pdt \;\;
\\ 
&(\text{we use (\textbf{b})})\\ 
&\leq 
k_{r}C_{1} \int_0^\infty t^{-\frac{1}{2}-\frac{1}{r}}(t+\mu)^{-\frac{1}{2}+\frac{1}{p}}dt\;\|f\|_p = K_{1,r}\|f\|_p,  \quad f \in \mathcal E,
\end{align*}
where, clearly, $K_{1,r}<\infty$ due to $r<p$. 

The proof of (\textit{jj}) is completed.
\end{proof}

\begin{remark}
Proposition \ref{prop1} is valid for $b_n$, $n=1,2,\dots$, with the same constants.
\end{remark}

\begin{proposition}
\label{lem_15}
The operator-valued function $\Theta_{p}(\mu,b_n)$ is a pseudo-resolvent on $\mu > \mu_0$, i.e.
 \begin{equation*}
\Theta_p(\mu,b_n) - \Theta_p(\nu,b_n) = (\nu - \mu) \Theta_p(\mu,b_n)\Theta_p(\nu,b_n), \quad \mu, \nu > \mu_0.
\end{equation*}
\end{proposition}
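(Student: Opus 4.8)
The plan is to verify the pseudo-resolvent (resolvent) identity for $\Theta_p(\mu,b_n)$ directly from its explicit form. Since $b_n$ is bounded with compact support (after the mollification and cutoff), $\Lambda_p(b_n) = -\Delta + b_n\cdot\nabla$ with $D(\Lambda_p(b_n)) = \mathcal W^{2,p}$ is a bona fide generator, so $(\mu+\Lambda_p(b_n))^{-1}$ satisfies the resolvent identity on the whole of its domain of definition in $\mu$. Thus the cleanest route is to first show that $\Theta_p(\mu,b_n)$ coincides with $(\mu+\Lambda_p(b_n))^{-1}$ for $\mu>\mu_0$, and then the pseudo-resolvent identity is automatic. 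For this, recall the algebraic identity underlying the formula for $\Theta_p$: writing $R_\mu := (\mu-\Delta)^{-1}$ and formally $\Lambda_p(b_n) = -\Delta + b_n\cdot\nabla$, one has the Neumann-type expansion
$$
(\mu+\Lambda_p(b_n))^{-1} = R_\mu - R_\mu |b_n|^{1-\frac{2}{p}}(1+T_p^{(n)})^{-1} b_n^{\frac{2}{p}}\cdot\nabla R_\mu,
$$
which is exactly $R_\mu - Q_p^{(n)}(1+T_p^{(n)})^{-1}G_p^{(n)}$ with the operators as defined in Proposition \ref{prop1}. The point of Proposition \ref{prop1} is that $\|T_p^{(n)}\|_{p\to p}\le c_{\delta,p}<1$ uniformly in $n$ and $\mu>\mu_0$, so $(1+T_p^{(n)})^{-1} = \sum_{k\ge 0}(-T_p^{(n)})^k \in\mathcal B(L^p)$ is well-defined.

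First I would establish the identity $\Theta_p(\mu,b_n)(\mu+\Lambda_p(b_n)) = \Id$ on $\mathcal W^{2,p}$ and $(\mu+\Lambda_p(b_n))\Theta_p(\mu,b_n) = \Id$ on $L^p$, by a direct computation. The key is the factorization $b_n\cdot\nabla = |b_n|^{1-\frac{2}{p}}\,b_n^{\frac{2}{p}}\cdot\nabla$ (valid since $|b_n^{\frac{2}{p}}| = |b_n|^{\frac{2}{p}}$ and the product of the two weights is $|b_n|$), so that $G_p^{(n)} = b_n^{\frac{2}{p}}\cdot\nabla R_\mu$ and $Q_p^{(n)} = R_\mu |b_n|^{1-\frac{2}{p}}$ indeed compose to give $b_n\cdot\nabla R_\mu$ preceded by $R_\mu$, and $T_p^{(n)} = b_n^{\frac{2}{p}}\cdot\nabla R_\mu |b_n|^{1-\frac{2}{p}}$, i.e. $G_p^{(n)} Q_p^{(n)} = T_p^{(n)}$ (as bounded operators on $L^p$, using Proposition \ref{prop1} for the closability). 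Applying $\mu+\Lambda_p(b_n) = \mu - \Delta + b_n\cdot\nabla$ to $\Theta_p(\mu,b_n)f$ and using $(\mu-\Delta)R_\mu = \Id$ gives, after collecting terms,
$$
f - |b_n|^{1-\frac{2}{p}}(1+T_p^{(n)})^{-1}G_p^{(n)}f \;+\; b_n^{\frac{2}{p}}\cdot\nabla R_\mu\Bigl(f - Q_p^{(n)}(1+T_p^{(n)})^{-1}G_p^{(n)}f\Bigr),
$$
and the last bracket's contribution is $G_p^{(n)}f - T_p^{(n)}(1+T_p^{(n)})^{-1}G_p^{(n)}f = (1+T_p^{(n)})^{-1}G_p^{(n)}f$, whence the two non-$f$ terms cancel and we get $f$. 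The reverse identity is analogous but requires a little more care since $\Lambda_p(b_n)$ now acts first; here one uses that $R_\mu$ maps into $\mathcal W^{2,p}$ and the ranges of $Q_p^{(n)}$ lie in $\mathcal W^{2,p}$ as well (for bounded $b_n$), so all compositions make sense classically.

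The main obstacle is the bookkeeping of domains and the justification of the formal manipulations for the not-a-priori-bounded factors $|b_n|^{1-\frac{2}{p}}$ and $b_n^{\frac{2}{p}}$ — but since $b_n$ is bounded and compactly supported these are in fact bounded multiplication operators, so for each fixed $n$ everything is rigorous without the density/extension arguments needed for general $b$. Once $\Theta_p(\mu,b_n) = (\mu+\Lambda_p(b_n))^{-1}$ is established, the resolvent identity $(\mu+\Lambda)^{-1} - (\nu+\Lambda)^{-1} = (\nu-\mu)(\mu+\Lambda)^{-1}(\nu+\Lambda)^{-1}$ for the genuine generator $\Lambda = \Lambda_p(b_n)$ gives the claim verbatim. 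Alternatively, if one prefers to avoid identifying the operator realization at this stage, one can verify the pseudo-resolvent identity purely algebraically: expand both sides using the representation of $\Theta_p$, use $G_p^{(n)}Q_p^{(n)} = T_p^{(n)}$ together with the resolvent identity for $R_\mu$ itself and the commutation relations $G_p^{(\mu)} - G_p^{(\nu)} = (\nu-\mu)G_p^{(\mu)}R_\nu = (\nu-\mu)R_\mu^{?}\cdots$ — this is more tedious but self-contained. I would present the first route as the main argument and remark that the second is available.
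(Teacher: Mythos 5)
Your overall strategy is sound and close in spirit to what the paper does: the paper's ``proof'' is only a pointer to \cite[Prop.\,2.4]{Ki}, where the identity is likewise reduced to the fact that for the regularized drift $b_n$ (bounded, smooth) the series/factorized representation $\Theta_p(\mu,b_n)$ really is the resolvent of the classical operator $-\Delta+b_n\cdot\nabla$ on $\mathcal W^{2,p}$, so the pseudo-resolvent identity is inherited from the genuine resolvent identity. Your route has the added feature of effectively proving Proposition \ref{lem_20} at the same time, which is a legitimate reorganization.

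Two points need repair. First, the algebraic identity you invoke, $G_p^{(n)}Q_p^{(n)}=T_p^{(n)}$, is false as written: $G_p^{(n)}Q_p^{(n)}=b_n^{\frac2p}\cdot\nabla(\mu-\Delta)^{-1}(\mu-\Delta)^{-1}|b_n|^{1-\frac2p}$ contains \emph{two} factors of the resolvent, whereas $T_p^{(n)}$ contains one. The identity actually needed is $b_n^{\frac2p}\cdot\nabla\,Q_p^{(n)}=T_p^{(n)}$, coming from the factorization $b_n\cdot\nabla=|b_n|^{1-\frac2p}\,b_n^{\frac2p}\cdot\nabla$. Your displayed intermediate expression inherits this slip: the bracket should read $f-|b_n|^{1-\frac2p}(1+T_p^{(n)})^{-1}G_p^{(n)}f$ (not $f-Q_p^{(n)}(1+T_p^{(n)})^{-1}G_p^{(n)}f$), and the whole $b_n^{\frac2p}\cdot\nabla R_\mu(\cdots)$ term must carry the prefactor $|b_n|^{1-\frac2p}$ for the two non-$f$ terms to cancel. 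With these corrections the cancellation you describe, $G_p^{(n)}f-T_p^{(n)}(1+T_p^{(n)})^{-1}G_p^{(n)}f=(1+T_p^{(n)})^{-1}G_p^{(n)}f$, is exactly right, so this is a fixable bookkeeping error rather than a conceptual one --- but as written the computation does not close. Second, your argument needs $\mu\in\rho(-\Lambda_p(b_n))$ for \emph{all} $\mu>\mu_0$ with $\mu_0$ independent of $n$; the naive perturbation bound only gives this for $\mu\gtrsim\|b_n\|_\infty^2$, which degenerates as $n\to\infty$. You should either invoke the uniform a priori bound $\|(\mu+\Lambda_p(b_n))^{-1}\|_{p\to p}\le(\mu-\mu_0)^{-1}$ of \cite[Theorem 1]{KS} (Proposition \ref{lem_contr}) together with openness of the resolvent set, or prove the identity $\Theta_p(\mu,b_n)=(\mu+\Lambda_p(b_n))^{-1}$ for large $\mu$ and extend the (analytic in $\mu,\nu$) pseudo-resolvent identity to all $\mu,\nu>\mu_0$ by continuation, using that the bounds of Proposition \ref{prop1} hold for $b_n$ with constants independent of $n$.
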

\begin{proof}
The proof repeats  \cite[proof of Prop.\,2.4]{Ki}.
\end{proof}

\begin{proposition}
\label{lem_18}
For every $n=1,2,\dots$,
$$\mu \Theta_{p}(\mu,b_n) \rightarrow 1 \text{ strongly in $L^p$ as } \mu\uparrow \infty \quad  (\text{uniformly in $n$}).$$
\end{proposition}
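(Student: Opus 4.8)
The plan is to show that $\mu\,\Theta_p(\mu,b_n)\to 1$ strongly in $L^p$ by splitting $\Theta_p(\mu,b_n)$ into its two summands according to the representation in part~(\textit{i}) of Theorem~\ref{thm1}. For the first summand, $\mu(\mu-\Delta)^{-1}\to 1$ strongly in $L^p$ as $\mu\uparrow\infty$ is classical (it is the statement that $-\Delta$ generates a $C_0$ semigroup on $L^p$; equivalently one may check it on the dense set $\mathcal W^{2,p}$ and use the uniform bound $\|\mu(\mu-\Delta)^{-1}\|_{p\to p}\le 1$). So the whole task reduces to proving that the second summand, $\mu Q_p (1+T_p)^{-1}G_p$, tends to $0$ strongly in $L^p$, uniformly in $n$.

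Next I would use the factorization from Proposition~\ref{prop1} together with the remark that it holds for every $b_n$ with the same constants. Writing $G_p=G_p(r)(\mu-\Delta)^{-\frac12+\frac1r}$ and $Q_p=(\mu-\Delta)^{-\frac12-\frac1q}$... more precisely, using part~(\textit{ii}), $Q_p=(\mu-\Delta)^{-\frac12-\frac1q}Q_p(q)$ and $G_p=G_p(r)(\mu-\Delta)^{-\frac12+\frac1r}$ with $2\le r<p<q<\infty$ fixed (say $r=2$, $q=2p$). Then
$$
\mu Q_p(1+T_p)^{-1}G_p = \mu\,(\mu-\Delta)^{-\frac12-\frac1q}\,Q_p(q)\,(1+T_p)^{-1}\,G_p(r)\,(\mu-\Delta)^{-\frac12+\frac1r}.
$$
By Proposition~\ref{prop1}, $\|Q_p(q)\|_{p\to p}\le K_{2,q}$, $\|G_p(r)\|_{p\to p}\le K_{1,r}$, and $\|(1+T_p)^{-1}\|_{p\to p}\le(1-c_{\delta,p})^{-1}$, all uniformly in $n$. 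Moreover $\|(\mu-\Delta)^{-\frac12+\frac1r}\|_{p\to p}\le \mu^{-\frac12+\frac1r}$ and $\|\mu(\mu-\Delta)^{-\frac12-\frac1q}\|_{p\to p}\le \mu^{\frac12-\frac1q}$ (by the bound $\|(\mu-\Delta)^{-\alpha}\|_{p\to p}\le\mu^{-\alpha}$ for $\alpha>0$, which follows from the subordination formula quoted in the proof of Proposition~\ref{prop1}). Hence the operator norm of the second summand is bounded by $C\,\mu^{\frac12-\frac1q}\mu^{-\frac12+\frac1r}=C\,\mu^{\frac1r-\frac1q}$, which is \emph{not} small — so a pure norm estimate is insufficient, and this mismatch of exponents is the main obstacle.

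To overcome it I would redistribute the powers of $(\mu-\Delta)^{-1}$ more carefully so that a genuinely negative power of $\mu$ is left over: keep one extra factor $(\mu-\Delta)^{-\beta}$ with $\beta>\frac1r-\frac1q>0$ to the left (this costs only the uniform bound $\|(\mu-\Delta)^{-\beta}\|_{p\to p}\le\mu^{-\beta}$, absorbing it into the $\mu^{\frac12-\frac1q}$ factor via the resolvent identity / semigroup law for fractional powers), so that altogether the norm of $\mu Q_p(1+T_p)^{-1}G_p$ is $\le C\mu^{-\varepsilon}$ for some $\varepsilon>0$. Concretely, since $G_p$ already carries the small factor $\mu^{-\frac12+\frac1p}$ from Proposition~\ref{prop1}(\textit{j}) and $Q_p$ carries $\mu^{-\frac12-\frac1p}$, the naive estimate $\|\mu Q_p(1+T_p)^{-1}G_p\|_{p\to p}\le \mu\cdot C_2\mu^{-\frac12-\frac1p}\cdot(1-c_{\delta,p})^{-1}\cdot C_1\mu^{-\frac12+\frac1p}=C\mu^{-0}$ is borderline; one genuinely needs either the improved factorization with a spare fractional power as above, or to test against the dense set $\mathcal E$ and use that on each fixed $g\in L^p$ one has $\|\mu(\mu-\Delta)^{-1-\epsilon}g\|_p\to 0$ together with the uniform bounds. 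I expect the clean route is: write $\mu Q_p(1+T_p)^{-1}G_p = \big[\mu(\mu-\Delta)^{-\frac12-\frac1q-\beta}\big]\,(\mu-\Delta)^{\beta}Q_p(q)\,(1+T_p)^{-1}G_p(r)\,(\mu-\Delta)^{-\frac12+\frac1r}$ choosing $\beta,r,q$ so the outer bracket is $O(\mu^{-\varepsilon})$ while everything else stays uniformly bounded in both $\mu$ and $n$; then the strong (indeed norm) convergence to $0$ is immediate. The uniformity in $n$ is automatic because every constant invoked ($K_{1,r}$, $K_{2,q}$, $c_{\delta,p}$, $C_1$, $C_2$, $\mu_0$) is independent of $n$ by Proposition~\ref{prop1} and the accompanying remark.

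Finally, combining the two pieces: $\mu\Theta_p(\mu,b_n)=\mu(\mu-\Delta)^{-1}-\mu Q_p(1+T_p)^{-1}G_p\to 1-0=1$ strongly in $L^p$ as $\mu\uparrow\infty$, uniformly in $n$. I would present the argument for a single fixed admissible pair $(r,q)$ (e.g. $r=2$, $q=2p$) to keep the bookkeeping light, remarking that any choice works. The only subtlety to flag explicitly is the justification that $(\mu-\Delta)^{\beta}Q_p(q)$ (equivalently the original $Q_p$ with its powers redistributed) is bounded uniformly in $n$ — this is exactly Proposition~\ref{prop1}(\textit{jj}) applied with the shifted exponent $q$ replaced by the value corresponding to $\tfrac12-\tfrac1q-\beta$, so nothing beyond what is already proved is needed.
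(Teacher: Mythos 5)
Your setup is right: split $\mu\Theta_p(\mu,b_n)=\mu(\mu-\Delta)^{-1}-\mu Q_p(1+T_p)^{-1}G_p$, handle the first term classically, and observe that uniformity in $n$ costs nothing because all constants in Proposition~\ref{prop1} are $n$-independent. The gap is in the treatment of the second term. Your proposed mechanism --- leaving a spare factor $(\mu-\Delta)^{-\beta}$ outside so as to obtain an overall norm bound $O(\mu^{-\varepsilon})$ --- cannot work, and your own arithmetic already shows why. Writing $(\mu-\Delta)^{\beta}Q_p(q)=Q_p(q')$ with $\tfrac{1}{q'}=\tfrac1q+\beta$, the requirement that the outer powers of $\mu$ sum to something negative is $\beta>\tfrac1r-\tfrac1q$, hence $\tfrac{1}{q'}>\tfrac1r\geq\tfrac1p$, i.e.\ $q'<p$; but Proposition~\ref{prop1}(\textit{jj}) only bounds $Q_p(q')$ for $q'>p$ (for $q'\leq p$ the integral defining $K_{2,q'}$, namely $\int_0^\infty t^{-\frac12+\frac1{q'}}(t+\mu)^{-\frac12-\frac1p}\,dt$, diverges at infinity). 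The same obstruction appears if you try to push the extra power onto the $G$ side. Indeed the exponents of Proposition~\ref{prop1}(\textit{j}) are exactly critical here: $\mu\cdot C_2\mu^{-\frac12-\frac1p}\cdot(1-c_{\delta,p})^{-1}\cdot C_1\mu^{-\frac12+\frac1p}=\mathrm{const}$, so the correction term is uniformly bounded in operator norm but there is no norm decay to be extracted; it need not converge to $0$ in operator norm at all.

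The correct completion is the standard one (and is what the cited proof in \cite{Ki} does): combine the \emph{uniform} bound $\sup_{\mu>\mu_0,\,n}\|\mu Q_p(1+T_p)^{-1}G_p\|_{p\to p}<\infty$ with convergence to $0$ on a dense subspace, the natural choice being $(\lambda-\Delta)^{-1}L^p$. For $f=(\lambda-\Delta)^{-1}g$ the resolvent identity gives
\begin{align*}
G_pf&=\frac{1}{\mu-\lambda}\Bigl(b^{\frac2p}\cdot\nabla(\lambda-\Delta)^{-1}g-b^{\frac2p}\cdot\nabla(\mu-\Delta)^{-1}g\Bigr),\\
\|G_pf\|_p&\leq\frac{C_1}{\mu-\lambda}\bigl(\lambda^{-\frac12+\frac1p}+\mu^{-\frac12+\frac1p}\bigr)\|g\|_p,
\end{align*}
so that $\|\mu Q_p(1+T_p)^{-1}G_pf\|_p\leq C\,\mu^{\frac12-\frac1p}(\mu-\lambda)^{-1}\|g\|_p\to0$, uniformly in $n$. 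This is the step your sketch gestures at (``test against a dense set'') but does not carry out: the dense set $\mathcal E$ you name is not the relevant one, and the decay has to come from $G_p$ acting on a resolvent range, not from an extra fractional power of $(\mu-\Delta)^{-1}$ that the factorization does not make available.
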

\begin{proof}
The proof repeats \cite[proof of Prop.\,2.5(\textit{ii})]{Ki}.
\end{proof}

\begin{proposition}
\label{lem_20}
We have $\{\mu: \mu > \mu_0\} \subset \rho(-\Lambda_p(b_n))$, the resolvent set of $-\Lambda_p(b_n)$. The operator-valued function $\Theta_{p}(\mu,b_n)$ is the resolvent of $-\Lambda_p(b_n)$:
\begin{equation*}
\Theta_{p}(\mu,b_n)=(\mu+\Lambda_p(b_n))^{-1}, \quad \mu > \mu_0.
\end{equation*}
\end{proposition}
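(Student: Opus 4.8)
The plan is to identify $\Theta_p(\mu,b_n)$, which by Proposition \ref{lem_15} is a pseudo-resolvent on $\{\mu>\mu_0\}$, with the genuine resolvent of $-\Lambda_p(b_n)$. Recall that $\Lambda_p(b_n) = -\Delta + b_n\cdot\nabla$ with $D(\Lambda_p(b_n)) = \mathcal W^{2,p}$; since $b_n$ is bounded and smooth, $b_n\cdot\nabla$ is a relatively bounded (in fact, for $\mu$ large, relatively small) perturbation of $-\Delta$ on $L^p$, so $-\Lambda_p(b_n)$ generates a $C_0$ semigroup and $\{\mu>\mu_0\} \subset \rho(-\Lambda_p(b_n))$ once $\mu_0$ is taken large enough (this uses only that $|b_n|\in L^\infty$ and the standard perturbation theorem for generators, together with the $L^p$-resolvent bound for $-\Delta$). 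The first step is thus to record this elementary fact and fix $\mu_0$ accordingly (enlarging the $\mu_0$ of Proposition \ref{prop1} if necessary).

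The second step is the algebraic heart: verify that $\Theta_p(\mu,b_n)$ is a right inverse of $\mu+\Lambda_p(b_n)$ on a suitable core, and then a two-sided inverse. Concretely, for $f\in L^p$ set $u:=\Theta_p(\mu,b_n)f = (\mu-\Delta)^{-1}f - Q_p(1+T_p)^{-1}G_p f$ (using the representation of part (\textit{i}), valid with the same operators and constants for $b_n$ by the remark following Proposition \ref{prop1}). One checks that $u\in \mathcal W^{2,p} = D(\Lambda_p(b_n))$ — here one uses that for fixed $n$ the factors $|b_n|^{1-2/p}$ and $b_n^{2/p}$ are bounded, so $Q_p$ maps into $\mathcal W^{2,p}$ and the expression has the required smoothness — and then computes $(\mu-\Delta)u + b_n\cdot\nabla u$ and shows it equals $f$. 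Writing $g:=(1+T_p)^{-1}G_p f$, we have $(\mu-\Delta)u = f - (\mu-\Delta)Q_p g = f - |b_n|^{1-2/p} g$, while $b_n\cdot\nabla u = G_p f - b_n\cdot\nabla Q_p g = G_p f - T_p g = G_p f - ((1+T_p)g - g) = G_p f - G_p f + g = |b_n|^{1-2/p}\cdot\,$— more precisely one must track that $b_n\cdot\nabla Q_p = |b_n|^{1-2/p}\,b_n^{2/p}\cdot\nabla(\mu-\Delta)^{-1}|b_n|^{1-2/p}/|b_n|^{1-2/p}$; the clean bookkeeping is that $b_n\cdot\nabla = |b_n|^{1-2/p}\, b_n^{2/p}\cdot\nabla$, so $b_n\cdot\nabla Q_p g = |b_n|^{1-2/p} T_p g$ and $b_n\cdot\nabla(\mu-\Delta)^{-1}f = |b_n|^{1-2/p} G_p f$, whence $(\mu-\Delta)u + b_n\cdot\nabla u = f - |b_n|^{1-2/p}g + |b_n|^{1-2/p}(G_p f - T_p g) = f$, using $g = G_p f - T_p g$. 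This shows $(\mu+\Lambda_p(b_n))\Theta_p(\mu,b_n) = \Id$ on $L^p$.

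The third step is injectivity of $\mu+\Lambda_p(b_n)$, equivalently that $\Theta_p(\mu,b_n)$ is also a left inverse. Since $\mu+\Lambda_p(b_n)$ is already known to be boundedly invertible (Step 1), a bounded right inverse is automatically the inverse: from $(\mu+\Lambda_p(b_n))\Theta_p = \Id$ and the existence of $(\mu+\Lambda_p(b_n))^{-1}\in\mathcal B(L^p)$ one gets $\Theta_p = (\mu+\Lambda_p(b_n))^{-1}(\mu+\Lambda_p(b_n))\Theta_p = (\mu+\Lambda_p(b_n))^{-1}$. Alternatively, and without invoking Step 1's invertibility, one can argue purely from the pseudo-resolvent structure: by Propositions \ref{lem_15} and \ref{lem_18}, $\Theta_p(\mu,b_n)$ is a pseudo-resolvent with trivial kernel (since $\mu\Theta_p(\mu,b_n)\to\Id$ strongly, $\Ker\Theta_p(\mu,b_n) = \{0\}$ and the range is dense), hence it is the resolvent of a densely defined closed operator $A_n$; the computation in Step 2 identifies $A_n$ with $-\Lambda_p(b_n)$ on the core $\mathcal W^{2,p}$, and since $\Lambda_p(b_n)$ is already closed with domain exactly $\mathcal W^{2,p}$, the two operators coincide.

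The main obstacle is the domain identification in Step 2 — namely verifying rigorously that $u=\Theta_p(\mu,b_n)f$ lies in $\mathcal W^{2,p}$ and that the formal manipulation $b_n\cdot\nabla = |b_n|^{1-2/p}\,b_n^{2/p}\cdot\nabla$ together with the definitions of $Q_p, G_p, T_p$ produces the telescoping cancellation $f = f$; once the operators are peeled apart correctly this is bookkeeping, but the factorization through the (for general $b$ merely densely defined, here bounded) multiplication operators must be handled with care. Everything else is either cited (Propositions \ref{lem_15}, \ref{lem_18}, \ref{prop1}) or standard perturbation theory for the bounded field $b_n$.
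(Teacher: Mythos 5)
Your argument is correct and is essentially the standard one the paper invokes by citing \cite[proof of Prop.\,2.6]{Ki}: for the bounded smooth field $b_n$ one checks algebraically that $\Theta_p(\mu,b_n)$ maps into $\mathcal W^{2,p}$ and inverts $\mu+\Lambda_p(b_n)$ there, and combines this with the pseudo-resolvent property; your telescoping computation $(\mu-\Delta)u+b_n\cdot\nabla u=f-|b_n|^{1-\frac{2}{p}}g+|b_n|^{1-\frac{2}{p}}g=f$ is exactly the right bookkeeping. One caution: the constant produced by naive perturbation theory in your Step 1 depends on $\|b_n\|_\infty$ and hence on $n$, so to obtain the conclusion on the full, $n$-independent half-line $\{\mu>\mu_0\}$ you must rely on your alternative route in Step 3 (trivial kernel and dense range of the pseudo-resolvent, via Propositions \ref{lem_15} and \ref{lem_18}, followed by identification of the resulting closed operator with $\Lambda_p(b_n)$ on $\mathcal W^{2,p}$), rather than on ``a bounded right inverse of an already invertible operator is the inverse,'' which only covers $\mu$ large depending on $n$.
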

\begin{proof}
The proof repeats \cite[proof of Prop.\,2.6]{Ki}.
\end{proof}

\begin{proposition}
\label{lem_contr}
We have, for all $n=1,2,\dots$,
$$
\|(\mu+\Lambda_p(b_n))\|_{p \rightarrow p} \leq (\mu-\mu_0)^{-1}, \quad \mu > \mu_0:=\mu_0 \vee \frac{\lambda\delta}{2(p-1)}. 
$$
\end{proposition}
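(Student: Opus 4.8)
The plan is to prove the stated operator-norm bound — which is just the quasi-contractivity estimate for the resolvent $(\mu+\Lambda_p(b_n))^{-1}=\Theta_p(\mu,b_n)$ (Proposition \ref{lem_20}) — by a direct $L^p$ energy inequality, exploiting that for fixed $n$ the field $b_n$ is bounded and smooth, so that $u:=(\mu+\Lambda_p(b_n))^{-1}f\in D(\Lambda_p(b_n))=\mathcal W^{2,p}$ solves $(\mu-\Delta+b_n\cdot\nabla)u=f$ classically. First, since $-\Delta+b_n\cdot\nabla$ with bounded $b_n$ generates a positivity preserving semigroup, $\Theta_p(\mu,b_n)$ is positivity preserving, so $|u|\le\Theta_p(\mu,b_n)|f|$ and it suffices to treat $0\le f$ (hence $0\le u$). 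I would then multiply the equation by $u^{p-1}$ and integrate — this is the computation already performed in step $(\textbf{c})$ of the proof of Proposition \ref{prop1} — to get
\[
\mu\|u\|_p^p+\tfrac{4(p-1)}{p^2}\|\nabla u^{\frac p2}\|_2^2+\tfrac1p\langle b_n,\nabla u^{p}\rangle=\langle f,u^{p-1}\rangle\le\|f\|_p\|u\|_p^{p-1}.
\]

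The heart of the matter is controlling the drift term. Writing $\nabla u^{p}=2u^{\frac p2}\nabla u^{\frac p2}$, the Cauchy--Schwarz and Young inequalities with a weight $\sigma>0$ give
\[
\Big|\tfrac1p\langle b_n,\nabla u^{p}\rangle\Big|\le\tfrac2p\big\||b_n|u^{\frac p2}\big\|_2\big\|\nabla u^{\frac p2}\big\|_2\le\tfrac1{p\sigma}\big\||b_n|u^{\frac p2}\big\|_2^2+\tfrac\sigma p\big\|\nabla u^{\frac p2}\big\|_2^2,
\]
and then the form-boundedness $\||b_n|\varphi\|_2^2\le\delta\|\nabla\varphi\|_2^2+\lambda\delta\|\varphi\|_2^2$ (valid with the common $\lambda=\lambda_\delta$ for all $n$, as arranged in the proof of Proposition \ref{prop1}), applied to $\varphi=u^{\frac p2}$, bounds $\||b_n|u^{\frac p2}\|_2^2\le\delta\|\nabla u^{\frac p2}\|_2^2+\lambda\delta\|u\|_p^p$. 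Collecting terms yields
\[
\Big(\mu-\tfrac{\lambda\delta}{p\sigma}\Big)\|u\|_p^p+\Big(\tfrac{4(p-1)}{p^2}-\tfrac{\delta}{p\sigma}-\tfrac\sigma p\Big)\|\nabla u^{\frac p2}\|_2^2\le\|f\|_p\|u\|_p^{p-1}.
\]
The coefficient of the gradient term is nonnegative precisely when $\sigma^2-\tfrac{4(p-1)}{p}\sigma+\delta\le0$, an interval of $\sigma$ that is nonempty because $\sqrt\delta<\tfrac2p\le\tfrac{2(p-1)}{p}$ for $p\ge2$. I would take $\sigma=\tfrac{2(p-1)}{p}$, which makes that coefficient nonnegative and leaves $\mu-\tfrac{\lambda\delta}{p\sigma}=\mu-\tfrac{\lambda\delta}{2(p-1)}$.

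Discarding the nonnegative gradient term, one obtains $\big(\mu-\tfrac{\lambda\delta}{2(p-1)}\big)\|u\|_p^p\le\|f\|_p\|u\|_p^{p-1}$, hence $\|u\|_p\le\big(\mu-\tfrac{\lambda\delta}{2(p-1)}\big)^{-1}\|f\|_p$ whenever $\mu>\tfrac{\lambda\delta}{2(p-1)}$; since the redefined $\mu_0=\mu_0\vee\tfrac{\lambda\delta}{2(p-1)}$ dominates $\tfrac{\lambda\delta}{2(p-1)}$, this gives $\|(\mu+\Lambda_p(b_n))^{-1}\|_{p\to p}\le(\mu-\mu_0)^{-1}$ for $\mu>\mu_0$, and the estimate is uniform in $n$ because $\delta$, $\lambda$ and the threshold are $n$-independent. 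The only nonroutine point I would write out carefully is the legitimacy of testing the equation against $u^{p-1}$ (equivalently, that $\nabla u^{p/2}\in L^2$): this is handled exactly as in the a priori step of Proposition \ref{prop1} — regularize $u^{p-1}$ by $(\varepsilon+u^2)^{(p-2)/2}u$ and insert spatial cutoffs, then pass to the limit by monotone/dominated convergence using $u\in\mathcal W^{2,p}$ and the boundedness of $b_n$ — so there is no genuine analytic obstacle.
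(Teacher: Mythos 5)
Your proof is correct, but it takes a different route from the paper only in the sense that the paper does not supply an argument at all: its proof of Proposition \ref{lem_contr} is the single line ``By [Theorem 1, KS]'', i.e.\ it outsources the quasi-contractivity estimate to Kovalenko--Semenov. What you have written out is, in substance, the proof of that cited theorem: reduce to $0\le f$ by positivity preservation, test $(\mu-\Delta+b_n\cdot\nabla)u=f$ against $u^{p-1}$, write the drift term as $\tfrac1p\langle b_n,\nabla u^p\rangle$ with $\nabla u^p=2u^{p/2}\nabla u^{p/2}$, absorb it via Young's inequality with weight $\sigma$ and the form-bound applied to $\varphi=u^{p/2}$, and choose $\sigma=\tfrac{2(p-1)}{p}$ so that the coefficient of $\|\nabla u^{p/2}\|_2^2$ is nonnegative (which requires $\sigma^2-\tfrac{4(p-1)}{p}\sigma+\delta\le 0$, guaranteed by $\sqrt\delta<\tfrac2p\le\tfrac{2(p-1)}{p}$). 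Your computation lands exactly on the constant $\tfrac{\lambda\delta}{2(p-1)}$ appearing in the statement, which confirms you have reconstructed the intended estimate rather than a variant of it. The trade-off is clear: the paper's citation is shorter and defers the justification of the formal manipulations to [KS], while your version makes the note self-contained at the cost of having to address the legitimacy of the test function $u^{p-1}$ --- which you do correctly flag and which is unproblematic here since $b_n$ is bounded and smooth and $u\in\mathcal W^{2,p}$. No gap.
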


\begin{proof}
By \cite[Theorem 1]{KS}. 
\end{proof}

\begin{proposition}
\label{lem_30}
For every $\mu > \mu_0$,
\begin{equation*}
\Theta_{p} (\mu,b_n) \rightarrow \Theta_{p} (\mu,b) \text{ strongly in $L^p$}.
\end{equation*}
\end{proposition}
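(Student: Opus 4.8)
The plan is to establish the strong convergence $\Theta_p(\mu,b_n) \to \Theta_p(\mu,b)$ by exploiting the explicit structure $\Theta_p(\mu,b)=(\mu-\Delta)^{-1}-Q_p(1+T_p)^{-1}G_p$ given in Proposition \ref{prop1}, together with the uniform (in $n$) operator bounds already available there. The first term $(\mu-\Delta)^{-1}$ does not depend on $b$, so the whole task reduces to showing
$$
Q_p(b_n)\,(1+T_p(b_n))^{-1}\,G_p(b_n) \;\longrightarrow\; Q_p(b)\,(1+T_p(b))^{-1}\,G_p(b)
$$
strongly in $L^p$. Since $\|T_p(b_n)\|_{p\to p}\le c_{\delta,p}<1$ uniformly in $n$ by Proposition \ref{prop1}, the Neumann series $(1+T_p(b_n))^{-1}=\sum_{k\ge0}(-T_p(b_n))^k$ converges in operator norm, uniformly in $n$. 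Hence it suffices to prove strong convergence of each factor: $G_p(b_n)\to G_p(b)$, $T_p(b_n)\to T_p(b)$, and $Q_p(b_n)\to Q_p(b)$ strongly on $L^p$ (for $Q_p$ on the dense set $\mathcal E$, and then using the uniform bound $\|Q_p(b_n)\|_{p\to p}\le C_2\mu^{-1/2-1/p}$ to pass to all of $L^p$). Strong convergence of a uniformly bounded Neumann series then follows from strong convergence of $T_p(b_n)$ by a standard $3\varepsilon$-argument: truncate the series at a level $K$ large enough that the tails are small uniformly in $n$, and use that each finite product $T_p(b_n)^k\to T_p(b)^k$ strongly (products of strongly convergent, uniformly bounded operators converge strongly).

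The heart of the matter is therefore the strong convergence of the three building-block operators when $b_n=e^{\epsilon_n\Delta}(\mathbf 1_n b)\to b$. For this I would first check convergence on a convenient dense class, e.g.\ $f\in C_c^\infty$ (for $G_p$) and $f\in\mathcal E$ (for $Q_p$ and $T_p$, where the $|b|^{1-2/p}$ factor sits on the right). The key pointwise fact is that $b_n\to b$ in $L^2_{\loc}$ (indeed $\mathbf 1_n b\to b$ in $L^2_{\loc}$ and mollification preserves this), so along a subsequence $b_n\to b$ a.e.; combined with the uniform form-boundedness $b_n\in\mathbf F_\delta$ with common $\lambda_\delta$, this gives enough compactness. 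For $G_p(b_n)f=b_n^{2/p}\cdot\nabla(\mu-\Delta)^{-1}f$ with $f\in C_c^\infty$, the function $v:=\nabla(\mu-\Delta)^{-1}f$ is fixed and smooth, $b_n^{2/p}\cdot v\to b^{2/p}\cdot v$ a.e., and domination by $|b_n|^{2/p}|v|$ together with the uniform bound $\langle|b_n|^2|v|^p\rangle=\||b_n|(\lambda-\Delta)^{-1/2}(\lambda-\Delta)^{1/2}|v|^{p/2}\|_2^2\le\delta\lambda\|v\|_p^p+\delta\|\nabla|v|^{p/2}\|_2^2$ (finite, independent of $n$) yields $L^p$-convergence via Vitali's convergence theorem. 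The same scheme, now with the extra operator $(\mu-\Delta)^{-1}|b_n|^{1-2/p}$ acting first on $f\in\mathcal E$, handles $Q_p(b_n)$ and $T_p(b_n)$; here one uses that $|b_n|^{1-2/p}f\to|b|^{1-2/p}f$ in $L^p$ (again a.e.\ convergence plus $L^p$-domination on the set where $f$ has exponential decay in $|b|$), that $(\mu-\Delta)^{-1}:L^p\to W^{2,p}$ is bounded, hence $(\mu-\Delta)^{-1}|b_n|^{1-2/p}f\to(\mu-\Delta)^{-1}|b|^{1-2/p}f$ in $W^{1,p}$, and finally that pairing the gradient against $b_n^{2/p}$ converges by the same Vitali argument as for $G_p$, controlled by the uniform bound in Proposition \ref{prop1}.

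Once strong convergence on the dense classes is in hand, the uniform operator bounds from Proposition \ref{prop1} (valid for all $b_n$ with the same constants, as noted in the remark following its proof) upgrade this to strong convergence on all of $L^p$ by density. Assembling the pieces: $G_p(b_n)\to G_p(b)$ strongly, $(1+T_p(b_n))^{-1}\to(1+T_p(b))^{-1}$ strongly (uniformly bounded), and $Q_p(b_n)\to Q_p(b)$ strongly with uniform bound, so the composition converges strongly (using the standard fact that if $A_n\to A$ and $B_n\to B$ strongly with $\sup_n\|A_n\|<\infty$, then $A_nB_n\to AB$ strongly), which gives $\Theta_p(\mu,b_n)\to\Theta_p(\mu,b)$ strongly in $L^p$ for each $\mu>\mu_0$. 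The main obstacle I anticipate is the careful handling of the singular weights $|b_n|^{1-2/p}$ and $b_n^{2/p}$: one must ensure the a.e.\ convergence $b_n\to b$ (passing to a subsequence if necessary, which is harmless since the limit is identified independently) and produce the $n$-uniform $L^p$-domination needed to invoke Vitali — this is exactly where the uniform estimates of Proposition \ref{prop1}, rather than any new computation, do the work.
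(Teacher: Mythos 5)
Your overall architecture is exactly the one the paper intends (its proof is a pointer to \cite[Prop.~2.8]{Ki}): reduce to strong convergence of $G_p(b_n)$, $T_p(b_n)$, $Q_p(b_n)$ on dense sets, use the $n$-uniform bounds of Proposition \ref{prop1} and the Neumann series for $(1+T_p(b_n))^{-1}$, and compose strongly convergent, uniformly bounded operators. That reduction is correct. The gap is in the step you yourself identify as the heart of the matter. First, your appeal to Vitali for $\langle |b_n^{2/p}\cdot v-b^{2/p}\cdot v|^p\rangle\to 0$ is not justified: the bound $\langle |b_n|^2|v|^p\rangle\le \delta(\lambda\|v\|_p^p+\|\nabla |v|^{p/2}\|_2^2)$ gives only a uniform $L^1$ bound on the dominating sequence $|b_n|^2|v|^p$, which is a \emph{sequence} of dominants, not a single integrable dominant, and a uniform $L^1$ bound does not imply the uniform integrability Vitali requires. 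Second, for $Q_p$ and $T_p$ your ``$L^p$-domination on the set where $f$ has exponential decay in $|b|$'' does not hold pointwise: for $f=e^{-\varepsilon|b|}g\in\mathcal E$ one has $|b|^{1-\frac2p}e^{-\varepsilon|b|}\in L^\infty$, but $|b_n|=|e^{\epsilon_n\Delta}(\mathbf 1_n b)|$ is \emph{not} controlled pointwise by $|b|$ (mollification spreads a spike of $b$ onto a neighbourhood where $|b|$, hence the weight $e^{-\varepsilon|b|}$, may be of order one), so $|b_n|^{1-\frac2p}f$ admits no obvious $n$-independent $L^p$ majorant.

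Both defects are repairable, but they require an ingredient your proposal never uses: the elementary inequalities $|b_n^{2/p}-b^{2/p}|\le 2^{1-\frac2p}|b_n-b|^{2/p}$ and $\bigl||b_n|^{1-\frac2p}-|b|^{1-\frac2p}\bigr|\le |b_n-b|^{1-\frac2p}$, followed by the splitting $b_n-b=\bigl(e^{\epsilon_n\Delta}-1\bigr)(\mathbf 1_n b)-(1-\mathbf 1_n)b$. The truncation error $(1-\mathbf 1_n)b$ \emph{is} handled by genuine dominated convergence, since $(1-\mathbf 1_n)^2|b|^2|v|^p\le |b|^2|v|^p\in L^1$ by the form-bound and tends to $0$ a.e. The mollification error is where the phrase ``upon selecting appropriate $\varepsilon_n\downarrow 0$'' in the paper does real work: $\mathbf 1_n b$ is bounded with compact support, so one fixes $\epsilon_n$ a posteriori so that, say, $\|(e^{\epsilon_n\Delta}-1)(\mathbf 1_n b)\|_{L^2\cap L^d}\le 1/n$, and then estimates its contribution by H\"{o}lder against $|v|^p\in L^{\frac{d}{d-2}}$ (Sobolev embedding for $|v|^{p/2}\in W^{1,2}$). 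Without this quantitative control of the smoothing step, even the a.e.\ convergence $b_n\to b$ along the full sequence is not guaranteed. So: right skeleton, but the convergence of the factors — the only nontrivial part — needs to be rebuilt along these lines rather than via Vitali plus pointwise domination.
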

\begin{proof}
The proof repeats \cite[proof of Prop.\,2.8]{Ki}.
\end{proof}

Now, by the Trotter Approximation Theorem \cite[IX.2.5]{Ka}, 
  $\Theta_p(\mu,b)=(\mu+\Lambda_p(b))^{-1}$, $\mu > \mu_0,$
where $\Lambda_p(b)$ is the generator of a quasi contraction $C_0$ semigroup in $L^p$.
(\textit{i}) follows.
 (\textit{ii}) follows from Proposition \ref{prop1}(\textit{jj}).
(\textit{ii}) $\Rightarrow$ (\textit{iii}). 
(\textit{iv}) is Proposition \ref{lem_30}.
The proof of Theorem \ref{thm1} is completed.


\begin{thebibliography}{99}


\bibitem[Ka]{Ka} T.~Kato. Perturbation Theory for Linear Operators. \newblock {\em Springer-Verlag Berlin Heidelberg}, 1995.



\bibitem[Ki]{Ki} D.\,Kinzebulatov, 
\newblock \textit{A new approach to the $L^p$-theory of $-\Delta + b\cdot\nabla$, and its applications to Feller processes with general drifts},
\newblock {Ann.~Sc.~Norm.~Sup.~Pisa (5)}, \textbf{17} (2017), 685-711. 



\bibitem[KiS]{KiS} D.\,Kinzebulatov, Yu.\,A.\,Semenov. On the theory of the Kolmogorov operator in the spaces $L^p$ and $C_\infty.$ I. {\em Preprint, arXiv:1709.08598} (2017), 58 p.

\bibitem[KiS2]{KiS2} D.\,Kinzebulatov, Yu.\,A.\,Semenov. $W^{1,p}$ regularity of solutions to Kolmogorov equation  
and associated Feller semigroup. {\em Preprint, arXiv:1803.06033} (2018), 13 p.




\bibitem[KS]{KS} V.\,F.\,Kovalenko, Yu.\,A.\,Semenov.
{\newblock $C_0$-semigroups in $L^p(\mathbb R^d)$ and $C_\infty(\mathbb R^d)$ spaces generated by differential expression $\Delta+b\cdot\nabla$.} 
(Russian) {\em Teor. Veroyatnost. i Primenen.}, \textbf{35} (1990), 449-458; translation in {\em Theory Probab. Appl.} \textbf{35} (1990), p.\,443-453.





\end{thebibliography}
\end{document}